\numberwithin{equation}{section}
\newcommand*{\centerfloat}{%
  \parindent \z@
  \leftskip \z@ \@plus 1fil \@minus \textwidth
  \rightskip\leftskip
  \parfillskip \z@skip}
\newcounter{ctr}
\theoremstyle{plain}
\newtheorem{theorem}{Theorem}[section]
\newtheorem{lemma}[theorem]{Lemma}
\newtheorem{corollary}[theorem]{Corollary}
\newtheorem{proposition}[theorem]{Proposition}
\theoremstyle{definition}
\newtheorem{remark}[theorem]{Remark}
\newcommand{\ignore}[1]{}
\newcommand{\B}{\ensuremath{\mathcal{B}}}
\newcommand{\CC}{\ensuremath{\mathbb{C}}}
\newcommand{\F}{\ensuremath{\mathcal{F}}}
\newcommand{\g}{\ensuremath{\mathfrak{g}}}
\newcommand{\gl}{\ensuremath{\mathfrak{gl}}}
\newcommand{\h}{\ensuremath{\mathfrak{h}}}
\renewcommand{\hom}{\text{\rm Hom}}
\newcommand{\QQ}{\ensuremath{\mathbb{Q}}}
\renewcommand{\sl}{\ensuremath{\mathfrak{sl}}}
\newcommand{\ZZ}{\ensuremath{\mathbb{Z}}}
\newcommand{\be}{\begin{equation}}
\newcommand{\ee}{\end{equation}}
\newcommand{\tsr}{\ensuremath{\otimes}}
\newcommand{\hatsl}{\ensuremath{\widehat{\sl}}}
\DeclareMathOperator{\wt}{wt}
\DeclareMathOperator{\aff}{aff}
\DeclareMathOperator{\cl}{cl}
\newcommand{\ce}{\ensuremath{\tilde{e}}} 
\newcommand{\cf}{\ensuremath{\tilde{f}}}
\newcommand{\myr}{r}
\newcommand{\bdr}{\ensuremath{\mathbf{r}}}
\newcommand{\crc}[1]{#1\star}
\newcommand{\sfb}{\ensuremath{\mathsf{b}}}
\newcommand{\nocdots}{}
\newcommand{\spa}{\hspace{.3mm}}
\newlength{\mycellsize}
\newcommand\mytbl[1]{
\vcenter{
\let\\=\cr
\baselineskip=-16000pt \lineskiplimit=16000pt \lineskip=0pt
\halign{&\mytblcell{##}\cr#1\crcr}}}
\newcommand{\mytblcell}[1]{{%
\def \arg{#1}\def \void{}%
\ifx \void \arg
\vbox to \mycellsize{\vfil \hrule width \mycellsize height 0pt}%
\else \unitlength=\mycellsize
\begin{picture}(1,1)
\put(0,0){\makebox(1,1){$#1\vphantom{\crc{#1}}$}}
\put(0,0){\line(1,0){1}}
\put(0,1){\line(1,0){1}}
\put(0,0){\line(0,1){1}}
\put(1,0){\line(0,1){1}}
\end{picture}%
\fi}}
\newlength{\cellsize}
\newcommand\mytableau[1]{
\vcenter{
\let\\=\cr
\baselineskip=-16000pt \lineskiplimit=16000pt \lineskip=0pt
\halign{&\mytableaucell{##}\cr#1\crcr}}}
\newcommand{\mytableaucell}[1]{{%
\def \arg{#1}\def \void{}%
\ifx \void \arg
\vbox to \cellsize{\vfil \hrule width \cellsize height 0pt}%
\else \unitlength=\cellsize
\begin{picture}(1,1)
\put(0,0){\makebox(1,1){$#1\vphantom{\crc{#1}}$}}
\put(0,0){\line(1,0){1}}
\put(0,1){\line(1,0){1}}
\put(0,0){\line(0,1){1}}
\put(1,0){\line(0,1){1}}
\end{picture}%
\fi}}
\newcommand\boldtableau[1]{
\vcenter{
\let\\=\cr
\baselineskip=-16000pt \lineskiplimit=16000pt \lineskip=0pt
\halign{&\boldtableaucell{##}\cr#1\crcr}}}
\newcommand{\boldtableaucell}[1]{{%
\def \arg{#1}\def \void{}%
\ifx \void \arg
\vbox to \cellsize{\vfil \hrule width \cellsize height 0pt}%
\else \unitlength=\cellsize
\begin{picture}(1,1)
\put(0,0){\makebox(1,1){$\mathbf{#1\vphantom{\crc{#1}}}$}}
\put(0,0){\line(1,0){1}}
\put(0,1){\line(1,0){1}}
\put(0,0){\line(0,1){1}}
\put(1,0){\line(0,1){1}}
\end{picture}%
\fi}}
\title{The DARK side of generalized Demazure crystals}
\keywords{Kirillov-Reshetikhin crystals, energy, Demazure crystals, katabolism}
\begin{document}

\author{Jonah Blasiak}
\address{Department of Mathematics, Drexel University, Philadelphia, PA 19104}
\email{jblasiak@gmail.com}

\thanks{This work was supported by NSF Grant DMS-1855784.}

\begin{abstract}
Naoi \cite{NaoiVariousLevels} showed that tensor products of perfect Kirillov-Reshetikhin crystals
are isomorphic to certain generalized Demazure crystals.
We extend Naoi's results to address distinguished subsets of these tensor products.
In type A, these are naturally
described in terms of katabolizable tableaux which was key to
resolving conjectures
of Shimozono-Weyman \cite{SW} and Chen-Haiman~\cite{ChenThesis} in~\cite{BMP}.
\end{abstract}
\maketitle



\section{Introduction}
Naoi \cite{NaoiVariousLevels} showed that tensor products of perfect Kirillov-Reshetikhin (KR) crystals are isomorphic to certain generalized Demazure crystals introduced by
Lakshmibai-Littelmann-Magyar \cite{LakLitMag}.
From this  he obtained a Demazure operator formula for their characters using the well-developed theory of Demazure crystals
\cite{Josephdemazurecrystal, KashiwaraDemazure, LakLitMag, LittelmannCrystal}.
This formed a key step in his resolution of the $X=M$ conjecture \cite{NaoiXMConjecture} in type  $D_n^{(1)}$.

We extend Naoi's result to match a larger class of generalized Demazure crystals
with certain subsets of tensor products of perfect KR crystals, termed
 Kirillov-Reshetikhin affine Demazure (DARK) crystals.
Our result follows directly from techniques of \cite{NaoiVariousLevels},
but the deep combinatorial consequences shown for type $A$ in~\cite{BMP}
motivate this presentation of the results in the full generality of any nonexceptional type.

Naoi's work encompasses several earlier results connecting Demazure and KR crystals~\cite{FSS,SchillingTingley,Shimozonoaffine}.
The emphasis in these works is
on providing a model for KR crystals using the well-developed theory of Demazure crystals,
whereas here we are interested in using KR crystals to understand generalized Demazure crystals.
While there are combinatorial models of highest weight crystals for affine type~\cite{Kangyoungwalls,KKMMNN, KKMMNNDuke, LenartPostnikov, Littelmannpaths}
which lead to explicit descriptions of generalized Demazure crystals,
our explorations suggest that the combinatorics afforded by  DARK crystals is simpler.
This is possible because the isomorphism between generalized Demazure and DARK crystals is combinatorially nontrivial,
roughly analogous to the different models for the $U_q(\gl_n)$-highest weight crystal $B(\nu)$
afforded by semistandard tableaux of shape $\nu$ versus those provided by an
embedding $B(\nu) \hookrightarrow B(\lambda) \tsr B(\mu)$.

\section{Background on crystals}
We follow \cite{NaoiVariousLevels} almost completely and review the notation we will need,
emphasizing conventions which may not be well known.

\subsection{Affine Kac-Moody Lie algebras}
\label{ss affine kac moody lie algebras}

Let $\g$ be a complex affine Kac-Moody Lie algebra of
nonexceptional type (i.e., of type $A_n^{(1)}, B_n^{(1)}, C_n^{(1)}, D_n^{(1)}, A_{2n-1}^{(2)}, A_{2n}^{(2)},$ or $D_{n+1}^{(2)}$).
Let $I = \{0, 1, \ldots , n\}$ be the Dynkin nodes and $A = (a_{ij})_{i,j\in I}$ the Cartan matrix.
Let  $\h \subset \g$ be the Cartan subalgebra, which has a basis consisting of
the simple coroots $\{\alpha_i^\vee \mid  i \in  I\} \subset \h$ together with the scaling element $d \in \h$.
We have the linearly independent simple roots $\{\alpha_i \mid  i \in I\} \subset \h^*$, with pairings
$\langle \alpha_i^\vee, \alpha_j\rangle = a_{ij}$
and $\langle d, \alpha_i \rangle = \delta_{i 0}$ $(i,j \in I)$.
Let $(a_0, \dots, a_n)$ (resp. $(a_0^\vee, \dots, a_n^\vee))$ be the unique tuple of relatively prime positive
integers that give a linear dependence relation among the columns (resp. rows) of $A$.

Choose $N \in \ZZ_{\ge 1}$ and fundamental weights $\{\Lambda_i \mid i \in I\} \subset \h^*$
such that $\langle \alpha_i^\vee, \Lambda_j\rangle = \delta_{ij}$
 and $\langle d, \Lambda_j \rangle \in N^{-1} \ZZ$ for $i,j \in I$, the choices to be discussed further below.
Let $\delta = \sum_{i\in I} a_i\alpha_i$
be the null root. Note that
 $\{\Lambda_i \mid i \in I\} \sqcup \{\delta\}$ is a basis of $\h^*$
 and $\langle \alpha_i^\vee, \delta\rangle = 0$ for  $i \in I$ and $\langle d, \delta\rangle = a_0$.
Let
$P = \big \{\mu \in \h^* \mid \langle\alpha_i^\vee, \mu\rangle \in \ZZ \text{ for } i \in I, \langle d, \mu \rangle \in N^{-1}\ZZ\big\}
= \bigoplus_{i\in I} \ZZ \Lambda_i \oplus \ZZ \frac{\delta}{a_0 N} \subset \h^*$
be the weight lattice and
$P^+ = \sum_{i\in I} \ZZ_{\ge 0}\Lambda_i + \ZZ\frac{\delta}{a_0 N}$ the dominant weights.

Let  $\cl \colon  \h^* \to \h^*/\CC \delta$ be the canonical projection, and
set  $P_{\cl} = \cl(P) = \bigoplus_{i\in I} \ZZ \spa \cl(\Lambda_i)$.
Let $\aff \colon  \h^*/\CC\delta \to \h^*$ be the section of $\cl$ satisfying $\langle d, \aff(\mu) \rangle = 0$ for all $\mu \in h^* / \CC \delta$.
Set  $\varpi_i = \aff(\cl(\Lambda_i- a_i^\vee \Lambda_0))$ for  $i \in  I_0 := I \setminus \{0\}$.

The \emph{affine Weyl group} $W$ can be realized as the subgroup of $GL(\h^*)$ generated by the simple reflections $s_i$ $(i \in I)$, where $s_i$ acts by $s_i(\mu) = \mu - \langle \alpha_i^\vee, \mu \rangle \alpha_i$.  Let  $W_0$ be the subgroup generated by  $s_i$ for  $i \in I_0$.

Let $c_i = \max\{1, a_i/a_i^\vee\}$ for $i \in I_0$, and define $\widetilde{M} = \bigoplus_{i \in I_0} \ZZ c_i \varpi_i \subset P$.
For $\mu \in \widetilde{M}$, define the translation $t_\mu \in GL(\h^*)$ as in \cite[Equation 6.5.2]{KacBook} and set $T = \{t_\mu \mid \mu \in \widetilde{M}\}$.
These satisfy $t_\mu t_\lambda = t_{\mu+\lambda}$  and $w t_\mu w^{-1} = t_{w(\mu)}$ for  $w \in W_0$ and $\mu, \lambda \in \widetilde{M}$.
Thus  $\widetilde{W} = W_0 \ltimes T$  is a subgroup of $GL(\h^*)$, called the \emph{extended affine Weyl group}.

Let $\Sigma \subset \widetilde{W}$ denote the subgroup which takes the set $\{\alpha_i \mid i \in I\}$ to itself.
Thus each element $\tau \in \Sigma$ yields a permutation of $I$, which we also denote by $\tau$;
it is an automorphism of the Dynkin diagram, meaning that $a_{i j} = a_{\tau(i)\tau(j)}$ for all $i,j \in I$.
(See \cite[\S2.2, \S5.2]{NaoiVariousLevels} for the explicit description of $\Sigma$ as a set of permutations
in each type.)
There holds $\widetilde{W} \cong W \rtimes \Sigma$.
As discussed in \cite[\S2.2]{NaoiVariousLevels}, we can choose  $N$ and
$\langle d, \Lambda_i \rangle$
so that for all $\tau \in \Sigma$, $\tau(\Lambda_j) = \Lambda_{\tau(j)}$ for all $j \in I$ and $\tau(\delta) = \delta$.  Note that this implies $\widetilde{W}$ preserves  $P$.

\subsection{Crystals}
\label{ss crystals}

Let $U_q(\g)$ be the quantized enveloping algebra (as in \cite{Kas1}) specified by the above data
and the symmetric bilinear form  $(\cdot, \cdot) \colon  P \times P \to \QQ$ defined by $(\alpha_i, \alpha_j) = a_i^\vee a_i^{-1} a_{ij}$,  $(\alpha_i, \Lambda_0) = a_0^{-1}\delta_{i0}$,
$(\Lambda_0, \Lambda_0) = 0$.
It is generated by $e_i, f_i$,  $i \in I$,
and $q^h$,  $h \in P^* := \hom_\ZZ(P,\ZZ)$.  Let  $U'_q(\g)\subset U_q(\g)$ be
the subalgebra generated by $e_i, f_i$,  $i \in I$, and  $q^h$,  $h \in P_{\cl}^*=  \bigoplus_{i \in I}\ZZ \alpha_i^\vee$.
A \emph{$U_q(\g)$-crystal} (resp. $U'_q(\g)$-crystal) is
a set $B$ equipped with a \emph{weight function}
$\wt\colon B \to P$ (resp. $\wt\colon B \to P_{\cl}$) and \emph{crystal operators}
$\ce_i, \cf_i \colon B \sqcup \{0\} \to B \sqcup \{0\}$ ($i \in I$) such that for all $i \in I$ and $b \in B$, there holds
$\ce_i(0) = \cf_i(0) = 0$ and
\begingroup
\allowdisplaybreaks
\begin{align*}
&\wt(\ce_ib)= \wt(b) + \alpha_i \text{ whenever $\ce_ib \neq 0$}, \ \text{ and } \,
 \wt(\cf_ib)= \wt(b) - \alpha_i \text{ whenever $\cf_ib \neq 0$}; \\
\notag
& \varepsilon_i(b) := \max\{k \ge 0 \mid \ce_i^kb \neq 0\} < \infty, \quad \phi_i(b) := \max\{k \ge 0 \mid \cf_i^kb \neq 0\} < \infty;  \\
\notag
& \langle \alpha_i^\vee, \wt(b)  \rangle = \phi_i(b) - \varepsilon_i(b); \\
\notag
& \cf_i(\ce_ib) = b\text{ whenever $\ce_ib \neq 0$, \ \ \ and \ } \ce_i(\cf_ib) = b\text{ whenever $\cf_ib \neq 0$}.
\end{align*}%
\endgroup
This agrees with the notion of a seminormal crystal in \cite[\S7]{KashiwaraSurvey}.
We use the term \emph{crystal} to mean either a $U_q(\g)$-crystal or $U'_q(\g)$-crystal.

A \emph{strict embedding} of a crystal $B$ into a crystal $B'$ is an injective map
$\Psi \colon B \sqcup \{0\} \to B' \sqcup \{0\}$ such that $\Psi(0) = 0$  and
 $\Psi$ commutes with  $\wt$,   $\varepsilon_i$,  $\phi_i$,
$\ce_i$,  and $\cf_i$ for all  $i\in I$.  It is necessarily an isomorphism from  $B$ onto
a disjoint union of connected components of  $B'$.

For a $U_q(\g)$-crystal $B$ with weight function  $\wt \colon B \to P$, its \emph{$U_q'(\g)$-restriction} is the  $U_q'(\g)$-crystal with the same edges as  $B$ and
 weight function $\cl \circ \wt \colon  B \to P_{\cl}$.

For two crystals $B_1$ and $B_2$, their tensor product $B_1 \tsr B_2 = \{b_1 \tsr b_2 \mid b_1 \in B_1, b_2 \in B_2\}$ is
the crystal with weight function $\wt(b_1 \otimes b_2) = \wt(b_1) + \wt(b_2)$ and crystal operators
\begin{align}
\label{e crystal tensor 1}
\ce_i(b_1 \otimes b_2) &= \begin{cases}
\ce_ib_1 \otimes b_2 & \text{ if $\phi_i(b_1) \ge \varepsilon_i(b_2)$}, \\
b_1 \otimes \ce_ib_2 & \text{ if $\phi_i(b_1) < \varepsilon_i(b_2)$}.
\end{cases}\\
\label{e crystal tensor 2}
\cf_i(b_1 \otimes b_2) &= \begin{cases}
\cf_ib_1 \otimes b_2 & \text{ if $\phi_i(b_1) > \varepsilon_i(b_2)$}, \\
b_1 \otimes \cf_ib_2 & \text{ if $\phi_i(b_1) \le \varepsilon_i(b_2)$}.
\end{cases}
\end{align}


Kirillov-Reshetikhin modules  $W^{r,s}$ are finite-dimensional $U'_q(\g)$-modules
parameterized by $(r,s) \in I_0 \times \ZZ_{\ge 1}$.
For nonexceptional $\g$, the  $W^{r,s}$ have crystal pseudobases \cite{KKMMNNDuke, OkadotypeD, OkadoSchillingnonexcept}, and these
yield $U'_q(\g)$-crystals $B^{r,s}$ known as KR crystals.
We are interested in the subclass of
perfect KR crystals (see \cite{KKMMNN}); we will not work with the definition directly, but only need the following from  \cite{FOSperfect}:
a KR crystal  $B^{r,s}$ is perfect if and only if $c_r = \max\{1, a_r/a_r^\vee\}$ divides~$s$.

\subsection{Dynkin diagram automorphisms and crystals}
\label{ss twisiting}

For $\tau \in \Sigma$ and
$U_q(\g)$-crystals (resp. $U'_q(\g)$-crystals)  $B, B'$,
a bijection of sets $z \colon B \to B' $ is a  \emph{$\tau$-twist} if
\begin{align*}
\tau(\wt(b)) &= \wt(z(b)), \ \, \text{ and } \\
  z(\ce_{i}b) &=\ce_{\tau(i)}z(b), \ \   z(\cf_{i}b)= \cf_{\tau(i)}z(b) \ \text{ for all  $i \in I$, \ where  $z(0) := 0.$}
\end{align*}
Since $\tau(P) = P$ and $\tau(\delta) = \delta$, $\tau$ yields automorphisms of  $P$ and $P_{\cl}$, and thus $\tau(\wt(b))$ belongs to $P$ (resp.  $P_{\cl}$).

\begin{proposition}[{\cite[Lemma 6.5]{SchillingTingley}}, {\cite[Proposition 5.5]{NaoiVariousLevels}}]
\label{p KR twist}
For any KR crystal  $B$ and $\tau \in \Sigma$, there exists a unique  $\tau$-twist
 of $U_q'(\g)$-crystals
$\F_\tau^B \colon  B \to B$.
\end{proposition}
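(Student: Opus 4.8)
The plan is to reduce existence to a stability property of the Kirillov-Reshetikhin module underlying $B$, and to extract uniqueness from connectedness of $B$ together with a multiplicity-one property of its extremal weight. Write $B = B^{r,s}$, the crystal of the KR module $W^{r,s}$. Since $\tau$ is a Dynkin diagram automorphism ($a_{ij} = a_{\tau(i)\tau(j)}$), it induces an algebra automorphism $\theta_\tau$ of $U'_q(\g)$ with $e_i \mapsto e_{\tau(i)}$, $f_i \mapsto f_{\tau(i)}$, and $q^h \mapsto q^{\tau(h)}$, where $\tau$ acts on $P_{\cl}^* = \bigoplus_{i \in I} \ZZ\alpha_i^\vee$ by $\alpha_i^\vee \mapsto \alpha_{\tau(i)}^\vee$. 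For a $U'_q(\g)$-module $M$ with crystal pseudobasis, let ${}^{\theta_\tau}M$ be $M$ with action precomposed by $\theta_\tau^{-1}$; its crystal is the crystal of $M$ with weights transported by $\tau$ and operators relabelled $\ce_i \mapsto \ce_{\tau(i)}$, $\cf_i \mapsto \cf_{\tau(i)}$ --- exactly a $\tau$-twist of the crystal of $M$. Hence a $\tau$-twist $B \to B$ is the same datum as a pseudobasis-preserving $U'_q(\g)$-module isomorphism $W^{r,s} \xrightarrow{\sim} {}^{\theta_\tau}W^{r,s}$ (the distinction between $\tau$ and $\tau^{-1}$ here is immaterial, as $\Sigma$ is a group and the statement is asserted for all $\tau \in \Sigma$).

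For existence I would establish ${}^{\theta_\tau}W^{r,s} \cong W^{r,s}$ as $U'_q(\g)$-modules. KR modules are pinned down within their category by invariants (Drinfeld polynomials, or their characterization as minimal affinizations with prescribed classical decomposition) which the diagram automorphism permutes compatibly; using the explicit description of $\Sigma$ in each nonexceptional type from \cite[\S2.2, \S5.2]{NaoiVariousLevels}, one checks these invariants are preserved, so ${}^{\theta_\tau}W^{r,s}$ lies in the same isomorphism class. Choosing a module isomorphism $\Phi \colon W^{r,s} \xrightarrow{\sim} {}^{\theta_\tau}W^{r,s}$ and normalizing it (possible since the crystal pseudobasis is unique up to an overall scalar) so that it carries the crystal lattice to the crystal lattice and, modulo $q$, the pseudobasis to the pseudobasis, the resulting bijection of crystals is the desired $\F_\tau^B$. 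In type $A$ this recovers the promotion operator.

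For uniqueness, let $z_1, z_2$ be two $\tau$-twists $B \to B$ and put $z = z_2^{-1}z_1$. Using $\wt(z_i b) = \tau(\wt(b))$ and $z_i \ce_j = \ce_{\tau(j)}z_i$, a direct check shows $z$ preserves $\wt$ exactly and commutes with every $\ce_j$ and $\cf_j$. Now $B^{r,s}$ is connected as a $U'_q(\g)$-crystal, and its element $u$ of maximal classical weight $s\cl(\varpi_r)$ occurs with multiplicity one; since $z$ fixes weights, $z(u) = u$. Writing an arbitrary $b \in B$ as $b = Xu$ for a word $X$ in the $\ce_j, \cf_j$ and using that $z$ commutes with $X$ yields $z(b) = b$. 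Hence $z = \id$ and $z_1 = z_2$.

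The crux is the existence step: the representation-theoretic identity ${}^{\theta_\tau}W^{r,s} \cong W^{r,s}$ and the promotion of the module isomorphism to one respecting the crystal pseudobasis. The reduction and the uniqueness argument are formal by comparison. An alternative route to existence, closer to \cite{SchillingTingley}, realizes $B^{r,s}$ inside the crystal $B(s\varpi_r)$ of an extremal weight module carrying a $\widetilde{W} \cong W \rtimes \Sigma$ action, and verifies that $\tau \in \Sigma$ preserves the subset $B^{r,s}$ and restricts there to a $\tau$-twist; the obstacle then migrates to checking this stability, again via the type-by-type structure of $\Sigma$.
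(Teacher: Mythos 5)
First, a point of reference: the paper contains no proof of this proposition at all --- it is imported from the literature, the bracketed citations to Schilling--Tingley (Lemma 6.5) and Naoi (Proposition 5.5) serving in place of a proof. So your attempt can only be compared with those sources, and in outline it reconstructs their argument: existence by transporting the Dynkin diagram automorphism to the module level, uniqueness by connectedness plus a multiplicity-one weight. Your uniqueness half is complete and correct as written: $z = z_2^{-1}z_1$ is an honest weight-preserving crystal automorphism of $B$, it must fix the unique element of classical weight $s\cl(\varpi_r)$ (multiplicity one holds by the known classical decomposition of $B^{r,s}$, in which $B(s\cl(\varpi_r))$ occurs once and all other components have strictly smaller highest weight), and connectedness of $B^{r,s}$ --- a true but nontrivial fact for nonexceptional types, which you should cite rather than treat as free --- then propagates $z=\id$ along crystal paths.

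The existence half is where the proposal falls short of a proof. The claim that the invariants pinning down $W^{r,s}$ (Drinfeld polynomials, minimal-affinization data) are ``permuted compatibly'' by $\theta_\tau$, so that one merely ``checks they are preserved,'' misdescribes the difficulty. Every nontrivial $\tau\in\Sigma$ moves the node $0$, so $\theta_\tau$ does \emph{not} restrict to an automorphism of the classical subalgebra generated by $e_i,f_i$ with $i\in I_0$; consequently it does not act on Drinfeld polynomials (which are defined relative to $I_0$) by any relabeling of indices. Identifying ${}^{\theta_\tau}W^{r,s}$ requires re-expressing the module's structure with respect to a \emph{different} classical subalgebra, is type-dependent, and the isomorphism one obtains may shift the spectral parameter --- so the statement ``${}^{\theta_\tau}W^{r,s}\cong W^{r,s}$'' on the nose needs care, given that crystal pseudobases exist only at particular normalizations. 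This identification is precisely the content of the cited lemmas and of the results (Okado, Okado--Schilling, Kashiwara's level-zero theory) they rest on. In short: you have correctly located the crux, correctly reduced the proposition to it, and supplied a genuinely complete uniqueness argument, but the crux itself is asserted rather than proved, so the proposal is an accurate reconstruction of the architecture of the literature proof rather than a self-contained replacement for it. The same comment applies to your alternative route via extremal weight modules: the $\Sigma$-stability there is again the theorem, not a check.
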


There is also a unique  $\tau$-twist
$\F^\Lambda_\tau \colon B(\Lambda) \to B(\tau(\Lambda))$
for any $\Lambda \in P^+$, where $B(\Lambda)$ is the
$U_q(\g)$-crystal of the irreducible  $U_q(\g)$-module of highest weight  $\Lambda$ \cite{Kas1,KashiwaraSurvey}.

It is easily verified that if  $z_1 \colon B_1 \to B_1'$ and  $z_2 \colon B_2 \to B_2'$ are  $\tau$-twists,
then  so is $z_1 \tsr z_2 \colon B_1 \tsr B_2 \to B_1' \tsr B_2'$.
Thus  the tensor product of maps
$\F^{\Lambda^1}_\tau \tsr \F^{\Lambda^2}_\tau$
is the natural choice of  $\tau$-twist
from any tensor product  $B(\Lambda^1) \tsr B(\Lambda^2)$ of  highest weight $U_q(\g)$-crystals, $\Lambda^1,\Lambda^2 \in P^+$.
Using in addition Proposition \ref{p KR twist},
a similar  \emph{$\tau$-twist exists from any tensor product of KR crystals and highest weight crystals to another such product, and we denote it  $\F_\tau$} (these are the only crystals we will consider in this paper);
for example, for a KR crystal  $B$, we denote by $\F_\tau$ the map  $\F_\tau^{\Lambda_0} \tsr \F^B_\tau \colon
B(\Lambda_0) \tsr B \to B(\Lambda_{\tau(0)}) \tsr B$, where  $B(\Lambda_0)$ and  $B(\Lambda_{\tau(0)})$ are regarded as  $U'_q(\g)$-crystals by restriction.

\subsection{Generalized Demazure crystals}
For a crystal $B$, $S \subset B$,
and $i \in I$, define
\[\F_i \spa S := \{\cf_i^k b \mid b\in S, k \ge 0 \} \setminus \{0\} \subset B.\]
For a reduced expression $w = s_{i_1} \cdots s_{i_m} \in W$,
we write  $\F_w S$ for  $\F_{i_1} \cdots \F_{i_m} S$ when this is well defined, i.e., does not depend on the choice of reduced expression.
A \emph{Demazure crystal} is a subset of some highest weight $U_q(\g)$-crystal  $B(\Lambda)$
of the form $B_w(\Lambda) := \F_{w} \{u_{\Lambda}\}$ for some  $w \in W$, where $u_\Lambda$ is the
highest weight element of  $B(\Lambda)$;
it is well defined by \cite{KashiwaraDemazure}.

A  \emph{generalized Demazure crystal} is a subset of
a tensor product of highest weight crystals of the form
$\F_{w_1} \F_{\tau_1}\big( u_{\Lambda^1} \tsr \F_{w_{2}}\F_{\tau_2} \big( u_{\Lambda^{2}}  \tsr \cdots \F_{w_p} \F_{\tau_p}(\{u_{\Lambda^p}\}) \cdots \big) \big)$
for some $\Lambda^1, \dots, \Lambda^p \in P^+$, $w_1, \dots, w_p \in W$, and  $\tau_1, \dots, \tau_p \in \Sigma$.
The combinatorial excellent filtration theorem \cite{LakLitMag}, \cite{Josephdemazurecrystal} 
states that $u_{\Lambda^1} \tsr \F_{w_{2}}\{u_{\Lambda^{2}}\} \subset B(\Lambda^1) \tsr B(\Lambda^2)$
is a disjoint union of Demazure crystals.  It follows that (see \cite[Lemma 4.3]{NaoiVariousLevels})

\begin{theorem}
\label{t monomial times Demazure}
Any generalized Demazure crystal is a disjoint union of Demazure crystals
(and the above expression is well defined).
\end{theorem}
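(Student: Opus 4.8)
The plan is to induct on the number $p$ of tensor factors, using the combinatorial excellent filtration theorem to introduce each new highest weight factor and establishing elementary closure properties for the class of disjoint unions of Demazure crystals under the operations $\F_w$ ($w \in W$) and $\tau$-twists ($\tau \in \Sigma$). The key structural observation I would record first is that any two \emph{distinct} Demazure crystals lying in a common connected component of a tensor product of highest weight crystals both contain the highest weight element of that component, hence are never disjoint. Therefore, whenever a set is written as a disjoint union $\bigsqcup_j D_j$ of Demazure crystals, the $D_j$ necessarily lie in pairwise distinct connected components. (Here I use that every connected component of such a tensor product is isomorphic to some $B(M)$, so that a Demazure crystal inside a component is unambiguous, namely the image of some $B_v(M) \subset B(M)$.)

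With this in hand I would establish three closure properties. \emph{Closure under $\F_w$ and well-definedness:} if $S = \bigsqcup_j D_j$ with the $D_j$ in distinct components, then since each $\cf_i$ preserves connected components we get $\F_i S = \bigsqcup_j \F_i D_j$, still disjoint, and each $\F_i D_j$ is again a Demazure crystal by Kashiwara's theory — $\F_i B_v(M) = B_{s_i v}(M)$ when $s_i v > v$ and $\F_i B_v(M) = B_v(M)$ otherwise. Iterating, $\F_w S$ is a disjoint union of Demazure crystals, and its independence of the chosen reduced word follows from the $0$-Hecke (Demazure product) relations underlying \cite{KashiwaraDemazure}. \emph{Closure under $\tau$-twists:} a $\tau$-twist $z$ intertwines $\cf_i$ with $\cf_{\tau(i)}$ and carries highest weight elements to highest weight elements, so $z(B_v(M)) = B_{\tau v \tau^{-1}}(\tau(M))$; being a bijection it preserves disjointness, and hence maps disjoint unions of Demazure crystals to disjoint unions of Demazure crystals.

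\emph{Closure under tensoring on a new factor:} given $S = \bigsqcup_j D_j$ inside a tensor product $B$ of highest weight crystals and $\Lambda \in P^+$, I would write $u_\Lambda \tsr S = \bigsqcup_j (u_\Lambda \tsr D_j)$ and identify each summand with $u_\Lambda \tsr B_{v_j}(M_j) \subset B(\Lambda) \tsr B(M_j)$, using that the component containing $D_j$ is isomorphic to $B(M_j)$ and that the tensor product crystal structure depends only on the abstract crystal structure of its factors. The combinatorial excellent filtration theorem then says each $u_\Lambda \tsr B_{v_j}(M_j)$ is a disjoint union of Demazure crystals, and distinctness of the ambient components keeps the total union disjoint. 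Granting these three properties, the theorem follows by induction on $p$: the base case $p = 1$ is $\F_{w_1}\F_{\tau_1}\{u_{\Lambda^1}\} = B_{w_1}(\tau_1(\Lambda^1))$ (a single Demazure crystal, since the twist sends $u_{\Lambda^1}$ to $u_{\tau_1(\Lambda^1)}$), and for the inductive step I apply, to the inner expression $C_{k+1}$ assumed to be a disjoint union of Demazure crystals, first tensoring by $u_{\Lambda^k}$, then the twist $\F_{\tau_k}$, then $\F_{w_k}$, each of which preserves the class and the last of which is thereby well-defined.

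I expect the only real subtlety — the main obstacle in a fully careful writeup — to be the componentwise bookkeeping in the tensoring step: one must justify treating a Demazure crystal sitting inside a connected component of a large tensor product exactly as a Demazure crystal inside a genuine highest weight crystal $B(M)$ for the purpose of adjoining a new factor and applying the excellent filtration theorem. This is precisely where the isomorphism of connected components with highest weight crystals, and the fact that tensoring and the operators $\ce_i, \cf_i$ see only the abstract crystal structure, are needed. All the genuinely hard content resides in the excellent filtration theorem and in Kashiwara's Demazure theory, both of which are available.
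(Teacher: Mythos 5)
Your proposal is correct and takes essentially the same route as the paper: the paper deduces the theorem from the combinatorial excellent filtration theorem of \cite{LakLitMag, Josephdemazurecrystal}, deferring the inductive bookkeeping to \cite[Lemma 4.3]{NaoiVariousLevels}, and your induction on the number of tensor factors together with the closure properties under $\F_w$ (via Kashiwara's $0$-Hecke action on Demazure crystals), $\tau$-twists, and tensoring with $u_\Lambda$ is exactly that argument with the details written out. The componentwise subtlety you flag---identifying a Demazure crystal inside a connected component of a tensor product with a genuine $B_v(M) \subset B(M)$ before applying the excellent filtration theorem---is the right one, and your observation that disjoint Demazure crystals must lie in distinct components (since any two in the same component share the highest weight element) handles it correctly.
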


\section{Matching generalized Demazure and DARK crystals}

\begin{lemma}[\cite{Kas1}]
\label{l tensor injection}
For any  $\Lambda, \Lambda' \in P^+$, there is a strict embedding of $U_q(\g)$-crystals \break
$B(\Lambda + \Lambda') \hookrightarrow B(\Lambda) \tsr B(\Lambda')$ determined by $u_{\Lambda + \Lambda'} \mapsto u_\Lambda \tsr u_{\Lambda'}$;
it maps  $B(\Lambda + \Lambda')$ isomorphically onto a connected component of $B(\Lambda) \tsr B(\Lambda')$.
\end{lemma}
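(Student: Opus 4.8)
The plan is to realize $B(\Lambda+\Lambda')$ as the connected component of $B(\Lambda)\tsr B(\Lambda')$ generated by $u_\Lambda \tsr u_{\Lambda'}$, and then to check that the resulting identification is a strict embedding in the sense of Section~\ref{ss crystals}.

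First I would verify that $u_\Lambda \tsr u_{\Lambda'}$ is a highest weight element of weight $\Lambda+\Lambda'$. Since $u_\Lambda$ and $u_{\Lambda'}$ are the highest weight elements of $B(\Lambda)$ and $B(\Lambda')$, we have $\varepsilon_i(u_\Lambda)=\varepsilon_i(u_{\Lambda'})=0$ and $\ce_i u_\Lambda = \ce_i u_{\Lambda'} = 0$ for all $i \in I$. Because $\phi_i(u_\Lambda)=\langle \alpha_i^\vee, \Lambda\rangle \ge 0 = \varepsilon_i(u_{\Lambda'})$, the tensor product rule \eqref{e crystal tensor 1} gives $\ce_i(u_\Lambda \tsr u_{\Lambda'}) = \ce_i u_\Lambda \tsr u_{\Lambda'} = 0$, so this element is annihilated by every $\ce_i$, and by additivity of $\wt$ it has weight $\Lambda+\Lambda'$.

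Next I would invoke Kashiwara's crystal basis theory \cite{Kas1, KashiwaraSurvey}. By the compatibility of crystal bases with tensor products, $B(\Lambda)\tsr B(\Lambda')$ is the crystal basis of the integrable module $V(\Lambda)\tsr V(\Lambda')$. The $U_q(\g)$-submodule generated by (a lift of) $u_\Lambda \tsr u_{\Lambda'}$ is a highest weight module of highest weight $\Lambda+\Lambda'$, hence isomorphic to $V(\Lambda+\Lambda')$; since integrable highest weight modules are completely reducible, this submodule is a direct summand, and its crystal basis is exactly the connected component of $u_\Lambda \tsr u_{\Lambda'}$. By the uniqueness of crystal bases, this component is isomorphic to $B(\Lambda+\Lambda')$ via the unique crystal isomorphism $\Psi$ sending $u_{\Lambda+\Lambda'}\mapsto u_\Lambda \tsr u_{\Lambda'}$.

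Finally, extending by $\Psi(0)=0$ yields an injection $B(\Lambda+\Lambda')\sqcup\{0\} \to (B(\Lambda)\tsr B(\Lambda'))\sqcup\{0\}$ commuting with $\wt$, $\ce_i$, and $\cf_i$; since the seminormality axioms express $\varepsilon_i$ and $\phi_i$ in terms of $\ce_i, \cf_i$, the map automatically commutes with $\varepsilon_i$ and $\phi_i$ as well, so $\Psi$ is a strict embedding. As $B(\Lambda+\Lambda')$ is connected, its image is the single connected component identified above. The main obstacle is the middle step: showing that the component of $u_\Lambda\tsr u_{\Lambda'}$ is \emph{exactly} $B(\Lambda+\Lambda')$, neither larger nor smaller. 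This is precisely where the representation-theoretic input (the tensor product theorem together with complete reducibility) is needed, since the combinatorics of the tensor product rule alone does not pin down the isomorphism type of the component.
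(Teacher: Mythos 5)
Your proposal is correct and follows essentially the same route as the paper: verify via the tensor product rule that $u_\Lambda \tsr u_{\Lambda'}$ is annihilated by every $\ce_i$ and has weight $\Lambda+\Lambda'$, then invoke Kashiwara's theory from \cite{Kas1} to conclude that its connected component is isomorphic to $B(\Lambda+\Lambda')$. The only difference is one of packaging: the paper cites the crystal-level fact that $B(\Lambda)\tsr B(\Lambda')$ is a disjoint union of highest weight crystals, whereas you unwind that same citation into its module-level ingredients (tensor products of crystal bases, complete reducibility, and uniqueness of crystal bases), so no separate comparison is needed.
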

\begin{proof}
It is well known \cite{Kas1} that $B(\Lambda) \tsr B(\Lambda')$ is isomorphic to a disjoint union of highest weight crystals.
Since $\ce_i(u_\Lambda \tsr u_{\Lambda'}) = 0$  for all $i \in I$, it is the highest weight element of a connected component isomorphic to $B(\Lambda + \Lambda')$.
\end{proof}

The following result gives a beautiful connection between Demazure and KR crystals;
part (i) is due to \cite{KKMMNN} and (ii) to \cite[Theorem 6.1]{SchillingTingley}.
Let $w_0$ be the longest element of $W_0$.

\begin{theorem}
\label{t Schilling Tingley}
Let $B = B^{r,c_r s}$ be a perfect KR crystal.
There is a unique element $\sfb^{r,s} \in B$ satisfying
$\varepsilon_0(\sfb^{r,s}) = s$ and $\varepsilon_i(\sfb^{r,s}) = 0$ for $i \in I_0$.
Put $\mu = c_rw_0(\varpi_r)$ and write $t_\mu = y \tau$ with $y \in W$, $\tau \in \Sigma$.
\begin{list}{\emph{(\roman{ctr})}} {\usecounter{ctr} \setlength{\itemsep}{1pt} \setlength{\topsep}{2pt}}
\item There is a $U_q'(\g)$-crystal isomorphism
    \[\theta \colon  B(s\Lambda_0) \tsr B \xrightarrow{\cong} B(s\Lambda_{\tau(0)})\]
    which maps  $u_{s\Lambda_0} \tsr \sfb^{r,s} \mapsto u_{s\Lambda_{\tau(0)}}$.
    Here, $B(s\Lambda_0)$ and $B(s\Lambda_{\tau(0)})$ are regarded as $U'_q(\g)$-crystals by restriction---see \S\ref{ss crystals}.
\item  $\theta$ maps the subset $u_{s\Lambda_0} \tsr B$ onto the Demazure crystal $B_y(s\Lambda_{\tau(0)}) \subset B(s\Lambda_{\tau(0)})$.
\end{list}
\end{theorem}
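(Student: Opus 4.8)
The plan is to deduce both parts from the perfect crystal isomorphism of \cite{KKMMNN}, supplemented for (ii) by the Demazure recursion and the energy function, following \cite{SchillingTingley}. For part (i), first note that $B = B^{r,c_r s}$ is perfect, since $c_r \mid c_r s$ (\cite{FOSperfect}). Perfectness supplies a bijection $\varepsilon$ from the minimal elements of $B$ onto the dominant weights of $P_{\cl}$ of the corresponding level; as $s\spa\cl(\Lambda_0)$ is such a weight, its preimage is the unique element with $\varepsilon_0 = s$ and $\varepsilon_i = 0$ for $i \in I_0$, namely $\sfb^{r,s}$, giving existence and uniqueness. The perfect crystal isomorphism then identifies $B(s\Lambda_0) \tsr B$ with a single highest weight crystal $B(\Lambda^+)$, and a one-line check of \eqref{e crystal tensor 1} shows its highest weight element is $u_{s\Lambda_0} \tsr \sfb^{r,s}$: indeed $\varphi_i(u_{s\Lambda_0}) = s\delta_{i0} \ge \varepsilon_i(\sfb^{r,s})$ for every $i$, so each $\ce_i$ acts on the first factor and annihilates it. It remains to compute $\Lambda^+$, which restricts to $\varphi(\sfb^{r,s})$. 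The companion map $\varphi \circ \varepsilon^{-1}$ is induced by a Dynkin diagram automorphism, and the computations underlying Proposition \ref{p KR twist} (\cite[\S5--6]{NaoiVariousLevels}, \cite[Lemma 6.5]{SchillingTingley}) identify this automorphism with the $\Sigma$-component $\tau$ of $t_\mu = y\tau$. Using the normalization $\tau(\Lambda_0) = \Lambda_{\tau(0)}$, we obtain $\varphi(\sfb^{r,s}) = \tau\bigl(s\spa\cl(\Lambda_0)\bigr) = s\spa\cl(\Lambda_{\tau(0)})$, so $\Lambda^+ = s\Lambda_{\tau(0)}$ and $\theta$ is as claimed.

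For part (ii), since $\theta$ is a crystal isomorphism it commutes with every $\F_i$, so $\theta(u_{s\Lambda_0} \tsr B) = B_y(s\Lambda_{\tau(0)}) = \F_y\{u_{s\Lambda_{\tau(0)}}\}$ is equivalent to the internal identity $\F_y\{u_{s\Lambda_0} \tsr \sfb^{r,s}\} = u_{s\Lambda_0} \tsr B$ inside $B(s\Lambda_0) \tsr B$. The tensor rule \eqref{e crystal tensor 2} controls how $\F_y$ can act on an element $u_{s\Lambda_0} \tsr b$: for $i \in I_0$ one has $\varphi_i(u_{s\Lambda_0}) = 0$, so $\cf_i(u_{s\Lambda_0} \tsr b) = u_{s\Lambda_0} \tsr \cf_i b$ and these operators act purely on the second factor; for $i = 0$ one has $\varphi_0(u_{s\Lambda_0}) = s$, so $\cf_0$ acts on the second factor precisely when $\varepsilon_0(b) \ge s$, and otherwise leaves the subset $u_{s\Lambda_0} \tsr B$. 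Thus (ii) reduces to showing that the particular $y = t_\mu\tau^{-1}$ both keeps $\F_y$ inside $u_{s\Lambda_0} \tsr B$ (never applying $\cf_0$ to an element with $\varepsilon_0 < s$) and sweeps out all of it.

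This last matching is the heart of the argument, and where I expect the main obstacle. I would prove the containment $\F_y\{u_{s\Lambda_0} \tsr \sfb^{r,s}\} \subseteq u_{s\Lambda_0} \tsr B$ by running the Demazure recursion $B_{w s_i}(\Lambda) = \F_i\, B_w(\Lambda)$ against an explicit reduced word for $y$: the choice $\mu = c_r w_0(\varpi_r)$ is essential here, because $t_\mu$ shifts the affine degree (the $\delta$-coefficient) by exactly one level-$s$ step, so in the path realization $B(s\Lambda_0) \hookrightarrow \cdots \tsr B \tsr B$ the element $y$ advances by a single tensor factor and therefore cannot trigger an escaping application of $\cf_0$. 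Equality then follows from a character comparison: the $U'_q(\g)$-character of $u_{s\Lambda_0} \tsr B$ is $e^{s\cl(\Lambda_0)}\operatorname{ch} B$, while that of $B_y(s\Lambda_{\tau(0)})$ is the image under $\cl$ of the Demazure character $D_y\bigl(e^{s\Lambda_{\tau(0)}}\bigr)$, and the agreement of the two is precisely the energy-function/Demazure-character identity; since one subset is contained in the other and their characters coincide, they are equal. The genuinely delicate point throughout is the interaction of the affine operator $\cf_0$ with the translation $t_\mu$, which is exactly what makes the isomorphism in (ii) combinatorially nontrivial.
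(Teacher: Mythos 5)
The paper itself contains no proof of this statement: it is imported from the literature, with part (i) attributed to \cite{KKMMNN} and part (ii) to \cite[Theorem 6.1]{SchillingTingley}. So the comparison target is those cited arguments. Your sketch of part (i) does essentially reconstruct the standard perfect-crystal argument of \cite{KKMMNN}: perfectness of $B^{r,c_rs}$ (of level $s$), bijectivity of $\varepsilon$ on the minimal elements giving existence and uniqueness of $\sfb^{r,s}$, the tensor-rule check that $u_{s\Lambda_0}\tsr \sfb^{r,s}$ is highest weight, and the identification of $\varphi(\sfb^{r,s})$ via the diagram automorphism $\tau$ of $t_\mu = y\tau$, with that last identification legitimately deferred to the computations behind Proposition \ref{p KR twist}. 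Your reduction of part (ii) to the internal identity $\F_y\{u_{s\Lambda_0}\tsr\sfb^{r,s}\} = u_{s\Lambda_0}\tsr B$, and your analysis of when $\cf_0$ escapes the subset $u_{s\Lambda_0}\tsr B$, are also correct.

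Part (ii) itself, however, has a genuine gap in both halves of your argument. The containment step is a heuristic, not a proof: saying that in the path realization ``$y$ advances by a single tensor factor and therefore cannot trigger an escaping application of $\cf_0$'' is essentially a restatement of what must be shown, namely that along a reduced word for $y$ the operator $\F_0$ is never applied to an element $u_{s\Lambda_0}\tsr b$ with $\varepsilon_0(b)<s$; making this precise is the actual content of the induction on translation elements in \cite{SchillingTingley} and its precursors, and you give no reduction of it. More seriously, the equality step is circular: the ``energy-function/Demazure-character identity'' you invoke to compare characters is, in the generality of all nonexceptional types, a \emph{consequence} of part (ii), not an available input --- it is exactly how \cite[Theorem 7.1, Corollary 7.2]{NaoiVariousLevels} and this paper's Corollary \ref{c with energy} are obtained downstream of this theorem. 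Unless you supply an independent proof of that character identity (which exists only in special cases, e.g.\ type $A$ via Macdonald polynomial specializations), the character comparison cannot close the argument; the cited proof instead establishes the set-level equality $\F_y\{u_{s\Lambda_0}\tsr\sfb^{r,s}\} = u_{s\Lambda_0}\tsr B$ directly from the theory of perfect crystals and the recursive structure of Demazure crystals, and then derives the character/energy identity as a corollary.
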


\begin{remark}
It is convenient to allow  $s=0$ in
Theorem \ref{t Schilling Tingley},
which holds (trivially) with $B^{r,0} = \{\sfb^{r,0}\}$ defined to be the trivial  $U'_q(\g)$-crystal, i.e., $\wt(\sfb^{r,0})=0$ and $\ce_i(\sfb^{r,0})= \cf_i(\sfb^{r,0}) = 0$ for all  $i\in I$.
Note that  $B(0\Lambda_i)= B(0)= \{u_0\}$ is the trivial $U_q(\g)$-crystal.
\end{remark}

\begin{lemma}
\label{l lemma tsr help}
Let  $A$ and  $Z$ be  $U'_q(\g)$-crystals.
Let  $u \in A$, $z \in Z$, and $j_1,\dots, j_m \in I$,
and suppose that $\F_{j_1}\cdots\F_{j_m}(u \tsr z ) \subset u \tsr Z$ in  $A \tsr Z$.
Then for any $G = \cf_{j_{1}}^{d_{1}} \cdots \cf_{j_{m}}^{d_{m}}$,  $d_i \in \ZZ_{\ge 0}$,
$G (u \tsr z) = u \tsr G(z)$.
\end{lemma}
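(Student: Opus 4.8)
The plan is to show that, under the hypothesis, every lowering operator appearing in $G$ acts only on the second tensor factor, so that $G(u \tsr z) = u \tsr G(z)$ follows directly from the tensor product rule \eqref{e crystal tensor 2}.

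First I would set up a filtration by $S_{m+1} = \{u \tsr z\}$ and $S_k = \F_{j_k} S_{k+1}$ for $k = m, m-1, \dots, 1$, so that $S_1 = \F_{j_1} \cdots \F_{j_m}(u \tsr z)$ is exactly the set appearing in the hypothesis. The key preliminary observation is that $S \subseteq \F_i S$ for any $i$ and any set $S$ of crystal elements, since $\F_i S$ contains the $k=0$ terms $\cf_i^0 b = b$ and every $b$ is nonzero. Hence $S_{m+1} \subseteq S_m \subseteq \cdots \subseteq S_1 \subseteq u \tsr Z$, so in fact \emph{every} $S_k$ is contained in $u \tsr Z$; that is, every element reachable from $u \tsr z$ by the nested lowering process has first tensor factor equal to $u$.

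Next I would record the local statement that does the real work: if $c \in Z$ and $\cf_i(u \tsr c)$ lies in $(u \tsr Z) \cup \{0\}$, then $\cf_i(u \tsr c) = u \tsr \cf_i c$. This is immediate from the two cases of \eqref{e crystal tensor 2}: the alternative outcome $\cf_i(u \tsr c) = \cf_i u \tsr c$ occurs only when $\phi_i(u) > \varepsilon_i(c) \ge 0$, which forces $\cf_i u \neq 0$, and then $\cf_i u \tsr c$ is a nonzero element whose first factor $\cf_i u$ differs in weight from $u$, so it cannot lie in $(u \tsr Z) \cup \{0\}$.

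Finally I would assemble these by downward induction. Writing $G_k = \cf_{j_k}^{d_k} \cdots \cf_{j_m}^{d_m}$ (so $G = G_1$ and $G_{m+1} = \id$), I would prove $G_k(u \tsr z) = u \tsr G_k(z)$ for all $k$. In the inductive step one has $G_{k+1}(u \tsr z) = u \tsr c$ with $c = G_{k+1}(z)$, and this element lies in $S_{k+1}$; applying $\cf_{j_k}$ repeatedly keeps each intermediate $\cf_{j_k}^e(u \tsr c)$ inside $\F_{j_k} S_{k+1} = S_k \subseteq u \tsr Z$ for $0 \le e \le d_k$, so the local statement applies at each step and pushes the operator onto the second factor, giving $\cf_{j_k}^{d_k}(u \tsr c) = u \tsr \cf_{j_k}^{d_k} c = u \tsr G_k(z)$; the degenerate cases where $G_k(z) = 0$ are handled by the convention $u \tsr 0 = 0$. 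Taking $k = 1$ yields the claim. I expect the main obstacle to be the bookkeeping of containments: one must verify that the specific element $G_{k+1}(u \tsr z)$ and all of its $\cf_{j_k}$-descendants genuinely lie in the filtration set $S_k$, not merely in $u \tsr Z$ abstractly, since it is precisely membership in $S_k$ — guaranteed by the hypothesis — that licenses the local statement at every intermediate power $\cf_{j_k}^e$. The weight argument is routine; the only real content is that the nested $\F$-process is exactly what certifies that no lowering operator ever leaks onto the first factor.
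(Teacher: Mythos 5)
Your proof is correct and takes essentially the same approach as the paper's: the nested containments $S_{m+1}\subseteq\cdots\subseteq S_1\subseteq u\tsr Z$ force every intermediate element to remain in $u\tsr Z$, and the tensor product rule \eqref{e crystal tensor 2} then forbids any $\cf_i$ from acting on the left factor, since a left application requires $\phi_i(u) > \varepsilon_i(c)\ge 0$ and hence produces a nonzero element whose first factor is not $u$. The paper compresses your filtration-plus-induction bookkeeping into a single sentence, but the content is identical.
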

\begin{proof}
The containment tells us that 
each application of $\cf_i$ in computing  $G(u \tsr z)$ can only be applied to $u$ if  $\cf_i (u) =0$, but this would mean $\phi_i(u) = 0$ and  $\cf_i$ is applied on the left tensor factor, which is not allowed by the tensor product rule \eqref{e crystal tensor 2}.
\end{proof}

\begin{lemma}
\label{l lemma tsr help 2}
Maintain the notation of Theorem \ref{t Schilling Tingley} and in addition
let  $w \le y$ in Bruhat order and let $w = s_{j_1} \cdots s_{j_m}$ be a reduced expression for  $w$.
Let  $C$ be any  $U'_q(\g)$-crystal.
Then for any $G = \cf_{j_{1}}^{d_{1}} \cdots \cf_{j_{m}}^{d_{m}}$, $d_i \in \ZZ_{\ge 0}$, and $c \in C$,
there holds
$G( u_{s\Lambda_0} \tsr \sfb^{r,s} \tsr c)  = u_{s\Lambda_0} \tsr G(\sfb^{r,s} \tsr c)$ in the $U'_q(\g)$-crystal $B(s \Lambda_0) \tsr  B  \tsr  C$.
\end{lemma}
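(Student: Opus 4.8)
The plan is to invoke Lemma \ref{l lemma tsr help} with $A = B(s\Lambda_0)$, $Z = B\tsr C$, $u = u_{s\Lambda_0}$, and $z = \sfb^{r,s}\tsr c$: granting its containment hypothesis, the lemma delivers precisely the asserted identity $G(u_{s\Lambda_0}\tsr\sfb^{r,s}\tsr c)=u_{s\Lambda_0}\tsr G(\sfb^{r,s}\tsr c)$. Thus the entire task reduces to proving
\[\F_{j_1}\cdots\F_{j_m}(u_{s\Lambda_0}\tsr\sfb^{r,s}\tsr c)\subseteq u_{s\Lambda_0}\tsr B\tsr C,\]
that is, that the leftmost factor $u_{s\Lambda_0}$ is untouched throughout the computation of $\F_w$ on the triple tensor product.

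To bring the Demazure theory to bear, I would transport this containment through $\tilde\theta:=\theta\tsr\id_C$, which by Theorem \ref{t Schilling Tingley}(i) is a $U'_q(\g)$-crystal isomorphism $B(s\Lambda_0)\tsr B\tsr C\xrightarrow{\cong}B(s\Lambda_{\tau(0)})\tsr C$ carrying $u_{s\Lambda_0}\tsr\sfb^{r,s}\tsr c$ to $u_{s\Lambda_{\tau(0)}}\tsr c$. Since $\tilde\theta$ commutes with every $\cf_i$, hence with each $\F_i$ and with $\F_w$, and since Theorem \ref{t Schilling Tingley}(ii) gives $\tilde\theta(u_{s\Lambda_0}\tsr B\tsr C)=B_y(s\Lambda_{\tau(0)})\tsr C$, the containment above is equivalent to
\[\F_{j_1}\cdots\F_{j_m}(u_{s\Lambda_{\tau(0)}}\tsr c)\subseteq B_y(s\Lambda_{\tau(0)})\tsr C.\]

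The crux, which I expect to be the main obstacle, is verifying this last containment by tracking the left tensor factor. A typical element of the left-hand side is $\cf_{j_1}^{d_1}\cdots\cf_{j_m}^{d_m}(u_{s\Lambda_{\tau(0)}}\tsr c)$, which I would evaluate by applying the operators from right to left. By the tensor product rule \eqref{e crystal tensor 2}, a single $\cf_{j_k}$ applied to a state $p\tsr c''$ either fixes the left factor $p$ or replaces it by $\cf_{j_k}p$ (the latter necessarily nonzero, as it requires $\phi_{j_k}(p)>\varepsilon_{j_k}(c'')\ge 0$). Hence, across the block $\cf_{j_k}^{d_k}$, the left factor is advanced by $\cf_{j_k}$ exactly $e_k$ times for some $0\le e_k\le d_k$, independent of the interspersed moves on the right. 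Processing the blocks from the innermost ($k=m$) outward shows the resulting left factor is $\cf_{j_1}^{e_1}\cdots\cf_{j_m}^{e_m}u_{s\Lambda_{\tau(0)}}\in\F_w\{u_{s\Lambda_{\tau(0)}}\}=B_w(s\Lambda_{\tau(0)})$. The delicate point is aligning the innermost-first application of the powers with the reduced word so that the outcome is a genuine Demazure element and not merely some arbitrary string of operators.

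Finally, because $w\le y$ in Bruhat order, the standard nesting of Demazure crystals gives $B_w(s\Lambda_{\tau(0)})\subseteq B_y(s\Lambda_{\tau(0)})$; thus every left factor arising lies in $B_y(s\Lambda_{\tau(0)})$, establishing the equivalent containment. Pulling back through $\tilde\theta^{-1}$ returns the original containment $\F_w(u_{s\Lambda_0}\tsr\sfb^{r,s}\tsr c)\subseteq u_{s\Lambda_0}\tsr B\tsr C$, at which point Lemma \ref{l lemma tsr help} completes the proof.
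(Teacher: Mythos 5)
Your proof is correct and takes essentially the same route as the paper's: both reduce, via Lemma \ref{l lemma tsr help} with $A = B(s\Lambda_0)$ and $Z = B \tsr C$, to the containment $\F_{j_1}\cdots\F_{j_m}(u_{s\Lambda_0}\tsr\sfb^{r,s}\tsr c)\subseteq u_{s\Lambda_0}\tsr B\tsr C$, and both establish that containment by combining Theorem \ref{t Schilling Tingley} (i)--(ii) with the Bruhat nesting $B_w(s\Lambda_{\tau(0)})\subseteq B_y(s\Lambda_{\tau(0)})$ for $w \le y$. The only difference is organizational: the paper first proves the two-factor containment $\F_{j_1}\cdots\F_{j_m}(u_{s\Lambda_0}\tsr\sfb^{r,s})\subseteq u_{s\Lambda_0}\tsr B$ and then tensors on $C$ using $\F_{j_1}\cdots\F_{j_m}(S_1\tsr S_2)\subseteq \F_{j_1}\cdots\F_{j_m}(S_1)\tsr\F_{j_1}\cdots\F_{j_m}(S_2)$, whereas you transport the three-factor statement through $\theta\tsr\id_C$ and track the left tensor factor element by element --- which is the same bookkeeping in different clothing.
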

\begin{proof}
By Theorem \ref{t Schilling Tingley},
$u_{s\Lambda_0} \tsr B = \theta^{-1} (\F_y(u_{s\Lambda_{\tau(0)}})) = \F_y(\theta^{-1} (u_{s\Lambda_{\tau(0)}})) = \F_y(u_{s\Lambda_0} \tsr \sfb^{r,s})$.
Hence $\F_{j_1}\cdots\F_{j_m}(u_{s\Lambda_0} \tsr \sfb^{r,s} ) \subset \F_y(u_{s\Lambda_0} \tsr \sfb^{r,s} )  = u_{s\Lambda_0} \tsr B$ in  $B(s \Lambda_0) \tsr B$.
This implies
  $\F_{j_1}\cdots\F_{j_m} (u_{s\Lambda_0} \tsr \sfb^{r,s} \tsr c) \subset
 \F_{j_1}\cdots\F_{j_m} (u_{s\Lambda_0} \tsr \sfb^{r,s}) \tsr \F_{j_1}\cdots\F_{j_m} (c)
 \subset u_{s\Lambda_0} \tsr B \tsr C$.
The result then follows from Lemma \ref{l lemma tsr help} with  $A = B(s\Lambda_0)$, $Z = B \tsr C$.
\end{proof}

\begin{theorem}
\label{t KR to affine iso gentype}
Let $B_j = B^{\myr_j,c_{\myr_j}\lambda_j}$ for $j \in [p]$ be perfect KR crystals
with  $\bdr = (\myr_1,\dots, \myr_p) \in (I_0)^p$ and
$\lambda = (\lambda_1 \geq \lambda_2 \geq \cdots \geq \lambda_p \geq 0)$, and set $\lambda^j = \lambda_j - \lambda_{j+1}$ with $\lambda_{p+1} = 0$.
Put $\mu_j = c_{\myr_j}\omega_0(\varpi_{\myr_j})$ and write $t_{\mu_j} = y_j\tau_{j}$ with $y_j \in W$ and $\tau_j \in \Sigma$.
There is a strict embedding (see \S\ref{ss crystals}) of $U_q'(\g)$-crystals
\begin{align}
\label{et KR to affine iso gentype}
\Theta_{\bdr, \lambda}  \colon   B(\lambda_1\Lambda_0) \tsr B_1 \tsr \cdots \tsr B_p
\to
B(\lambda^1\Lambda_{\tau_{1}(0)}) \tsr \cdots \tsr B(\lambda^p \Lambda_{\tau_{1}\tau_{2} \cdots \tau_{p}(0)}).
\end{align}
\end{theorem}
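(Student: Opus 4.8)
The plan is to build $\Theta_{\bdr,\lambda}$ as a composite of strict embeddings, peeling off one KR factor $B_j$ at a time via Theorem~\ref{t Schilling Tingley} and splitting off the highest-weight factors $B(\lambda^j\Lambda_{\cdots})$ via Lemma~\ref{l tensor injection}. Throughout, every $B(\Lambda)$ is regarded as a $U'_q(\g)$-crystal by restriction; since a strict embedding of $U_q(\g)$-crystals stays strict after composing $\wt$ with $\cl$, Lemma~\ref{l tensor injection} supplies strict embeddings of $U'_q(\g)$-crystals. Tensoring any strict embedding with identity maps on the remaining factors is again a strict embedding, because strict embeddings preserve $\varepsilon_i$ and $\phi_i$, which are the only data the tensor rule \eqref{e crystal tensor 1}--\eqref{e crystal tensor 2} consults; and a composite of strict embeddings is one.

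Write $\sigma_j = \tau_1\tau_2\cdots\tau_j \in \Sigma$ with $\sigma_0 = \id$. The first ingredient is a twisted Schilling--Tingley isomorphism. Applying Theorem~\ref{t Schilling Tingley}(i) to the perfect KR crystal $B_j = B^{\myr_j,\,c_{\myr_j}\lambda_j}$ (so that $s=\lambda_j$, and the $y_j,\tau_j$ of the present statement are exactly those produced there) yields a $U'_q(\g)$-crystal isomorphism $\theta_j\colon B(\lambda_j\Lambda_0)\tsr B_j \xrightarrow{\cong} B(\lambda_j\Lambda_{\tau_j(0)})$. I then conjugate by the twist $\F_{\sigma_{j-1}}$ on the tensor product $B(\lambda_j\Lambda_0)\tsr B_j$ of a highest-weight and a KR crystal (which exists by Proposition~\ref{p KR twist} and \S\ref{ss twisiting}), setting $\theta_j' := \F_{\sigma_{j-1}} \circ \theta_j \circ \F_{\sigma_{j-1}}^{-1}$. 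Here the inner map $\F_{\sigma_{j-1}}^{-1}$ is a $\sigma_{j-1}^{-1}$-twist and the outer $\F_{\sigma_{j-1}}$ a $\sigma_{j-1}$-twist, while $\theta_j$ is an $\id$-twist; since composing twists multiplies the associated automorphisms, $\theta_j'$ is a genuine $U'_q(\g)$-crystal isomorphism. Using the normalization $\tau(\Lambda_i)=\Lambda_{\tau(i)}$ of \S\ref{ss affine kac moody lie algebras}, $\F_{\sigma_{j-1}}$ sends the source factor to $B(\lambda_j\Lambda_{\sigma_{j-1}(0)})$ and the target to $B(\lambda_j\Lambda_{\sigma_{j-1}\tau_j(0)}) = B(\lambda_j\Lambda_{\sigma_j(0)})$, so
\[
\theta_j'\colon B(\lambda_j\Lambda_{\sigma_{j-1}(0)}) \tsr B_j \xrightarrow{\cong} B(\lambda_j\Lambda_{\sigma_j(0)}).
\]

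With these in hand I would induct on $k=0,1,\dots,p$, the claim being that there is a strict embedding of the source into
\[
B(\lambda^1\Lambda_{\sigma_1(0)}) \tsr \cdots \tsr B(\lambda^{k}\Lambda_{\sigma_{k}(0)}) \tsr B(\lambda_{k+1}\Lambda_{\sigma_{k}(0)}) \tsr B_{k+1}\tsr\cdots\tsr B_p,
\]
where $\sum_{i=1}^{k}\lambda^i + \lambda_{k+1} = \lambda_1$ by the telescoping $\lambda^i = \lambda_i - \lambda_{i+1}$. The base case $k=0$ is the identity on the source. For the inductive step, apply $\theta_{k+1}'$ (tensored with identities) to the adjacent factors $B(\lambda_{k+1}\Lambda_{\sigma_k(0)})\tsr B_{k+1}$, merging them into $B(\lambda_{k+1}\Lambda_{\sigma_{k+1}(0)})$; then apply Lemma~\ref{l tensor injection} (tensored with identities), using $\lambda_{k+1} = \lambda^{k+1} + \lambda_{k+2}$, to embed this factor into $B(\lambda^{k+1}\Lambda_{\sigma_{k+1}(0)}) \tsr B(\lambda_{k+2}\Lambda_{\sigma_{k+1}(0)})$. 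This yields the $k+1$ form, the already-processed prefix and the trailing $B_{k+2}\tsr\cdots\tsr B_p$ being carried along by identities. At $k=p$ the trailing factor is $B(\lambda_{p+1}\Lambda_{\sigma_p(0)})=B(0)$, the trivial crystal (cf.\ the Remark after Theorem~\ref{t Schilling Tingley}), which may be deleted, leaving exactly $\bigotimes_{j=1}^{p} B(\lambda^j\Lambda_{\sigma_j(0)})$. Taking $\Theta_{\bdr,\lambda}$ to be the resulting composite completes the construction.

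The one genuinely delicate point is the bookkeeping of the Dynkin-diagram automorphisms: one must verify that conjugating the level-$\lambda_j$ Schilling--Tingley isomorphism by $\F_{\sigma_{j-1}}$ both cancels to an untwisted isomorphism and advances the fundamental-weight index from $\tau_j(0)$ to $\sigma_{j-1}\tau_j(0)=\sigma_j(0)$, so that the indices accumulate into the products $\tau_1\cdots\tau_j(0)$ demanded by the target. Everything else---injectivity, compatibility with $\wt,\varepsilon_i,\phi_i,\ce_i,\cf_i$, and the stability of these properties under tensoring with identities and composition---is routine once the twist calculus is set up, and the degenerate cases where some $\lambda_j=0$ (whence $B_j$ is trivial and $\lambda^j=0$) are absorbed by the conventions of that Remark.
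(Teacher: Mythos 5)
Your proposal is correct and follows essentially the same route as the paper: alternately applying the Schilling--Tingley isomorphism conjugated by the accumulated twist $\F_{\tau_1\cdots\tau_{j-1}}$ and the splitting embedding of Lemma~\ref{l tensor injection}, with the same bookkeeping $\sigma_{j-1}\tau_j(0)=\sigma_j(0)$ of fundamental-weight indices. The only (harmless) cosmetic differences are that you organize the composite as an explicit induction, spell out the twist calculus and the strictness of tensoring with identities, and handle the last factor by splitting off and deleting a trivial $B(0)$ rather than stopping the splitting one step earlier as the paper does.
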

\begin{proof}
Apply the isomorphism  $\theta$ of Theorem \ref{t Schilling Tingley} to the left two factors, then the strict embedding of Lemma \ref{l tensor injection}, then
apply $\F_{\tau_{1}}\theta \F_{\tau_{1}}^{-1}$ to the second and third factors, and so on:
\begin{align*}
&B(\lambda_1\Lambda_0) \tsr B_1 \tsr B_2\tsr \cdots \tsr B_p\\
\xrightarrow{\cong} \
& B(\lambda_1\Lambda_{\tau_{1}(0)}) \tsr B_2 \tsr \cdots \tsr B_p\\
\hookrightarrow \
& B(\lambda^1\Lambda_{\tau_{1}(0)}) \tsr B(\lambda_2\Lambda_{\tau_{1}(0)}) \tsr B_2 \tsr B_3 \tsr \cdots \tsr B_p \\
\xrightarrow{\cong} \
& B(\lambda^1\Lambda_{\tau_{1}(0)}) \tsr B(\lambda_2\Lambda_{\tau_{1}\tau_{2}(0)}) \tsr B_3 \tsr \cdots \tsr B_p \\
\hookrightarrow \
& B(\lambda^1\Lambda_{\tau_{1}(0)}) \tsr B(\lambda^2 \Lambda_{\tau_{1}\tau_{2}(0)}) \tsr B(\lambda_3 \Lambda_{\tau_{1}\tau_{2}(0)}) \tsr B_3 \tsr \cdots \tsr B_p \\
\cdots \\
 \to \ & B(\lambda^1\Lambda_{\tau_{1}(0)}) \tsr B(\lambda^2\Lambda_{\tau_{1}\tau_{2}(0)}) \tsr \cdots \tsr B(\lambda^p \Lambda_{\tau_{1}\tau_{2} \cdots \tau_{p}(0)}). \qedhere
\end{align*}
\end{proof}

\begin{theorem}
\label{t KR to affine iso subsets gentype}
Maintain the notation of Theorem \ref{t KR to affine iso gentype} and in addition let $w_1,\dots, w_p \in W$ with $w_i  \le y_i$ for all  $i$.
Then the subset
\begin{align}
\label{et KR to affine iso subsets gentype}
\B := \F_{w_1}
\big( \sfb^{\myr_1,\lambda_1} \tsr \F_{\tau_1} \F_{w_{2}} \big( \sfb^{\myr_2,\lambda_2} \tsr \cdots \F_{\tau_{p-1}} \F_{w_p} (\{\sfb^{\myr_p,\lambda_p}\})  \cdots \big) \big)
\subset B_1 \tsr \cdots \tsr B_p
\end{align}
is well defined---we call it a
DARK crystal---and the image of $u_{\lambda_{1} \Lambda_0} \tsr  \B$ under the strict embedding $\Theta_{\bdr, \lambda}$ is a generalized Demazure crystal:
\begin{align}
\label{et KR to affine iso subsets gentype 2}
\Theta_{\bdr, \lambda}(u_{\lambda_{1} \Lambda_0} \tsr  \B) =  \F_{w_1} \F_{\tau_1} \big( u_{\lambda^1\Lambda_{0}} \tsr \F_{w_{2}} \F_{\tau_2}  \big( u_{\lambda^2\Lambda_{0}} \tsr \cdots
 \F_{w_p} \F_{\tau_p}  (\{u_{\lambda^p\Lambda_0 }\})  \cdots \big)\big).
\end{align}
\end{theorem}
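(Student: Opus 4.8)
The plan is to induct on $p$, peeling off the outermost Kirillov--Reshetikhin factor $B_1$ in lockstep with the construction of $\Theta_{\bdr,\lambda}$ in the proof of Theorem~\ref{t KR to affine iso gentype}. Write $\bdr' = (\myr_2,\dots,\myr_p)$ and $\lambda' = (\lambda_2,\dots,\lambda_p)$, and let $\B' \subset B_2 \tsr \cdots \tsr B_p$ denote the DARK crystal built from $\bdr',\lambda'$ and $w_2,\dots,w_p$, so that $\B = \F_{w_1}\big(\sfb^{\myr_1,\lambda_1} \tsr \F_{\tau_1}\B'\big)$. The inductive hypothesis supplies that $\B'$ is well defined and that $\Theta_{\bdr',\lambda'}(u_{\lambda_2\Lambda_0} \tsr \B')$ equals the generalized Demazure crystal $D' := \F_{w_2}\F_{\tau_2}\big(u_{\lambda^2\Lambda_0} \tsr \cdots \F_{w_p}\F_{\tau_p}(\{u_{\lambda^p\Lambda_0}\})\cdots\big)$. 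The base case $p=1$ is the same argument with $\B'$ and $D'$ trivial.

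The first key step is a recursive factorization of $\Theta_{\bdr,\lambda}$. Reading off the chain of maps in the proof of Theorem~\ref{t KR to affine iso gentype}, I would record that
\[
\Theta_{\bdr,\lambda} = \big(\id \tsr (\F_{\tau_1}\,\Theta_{\bdr',\lambda'}\,\F_{\tau_1}^{-1})\big)\circ(\iota\tsr\id)\circ(\theta\tsr\id),
\]
where $\theta$ is the isomorphism of Theorem~\ref{t Schilling Tingley}(i) for $B_1$, $\iota$ is the tensor-split embedding of Lemma~\ref{l tensor injection} for $\lambda_1\Lambda_{\tau_1(0)} = \lambda^1\Lambda_{\tau_1(0)} + \lambda_2\Lambda_{\tau_1(0)}$, and the outer $\id$ is the identity on $B(\lambda^1\Lambda_{\tau_1(0)})$. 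Verifying this identity is where the Dynkin-automorphism bookkeeping lives: conjugating each building block of $\Theta_{\bdr',\lambda'}$ by $\F_{\tau_1}$ must reproduce the accumulating twists $\F_{\tau_1\tau_2\cdots\tau_j}$ of the construction, for which I would use the functoriality $\F_{\tau_1}\F_{\tau_2} = \F_{\tau_1\tau_2}$ together with the twist-equivariance of $\theta$ and of the embeddings of Lemma~\ref{l tensor injection}.

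Next I would trace the single element $u_{\lambda_1\Lambda_0} \tsr \sfb^{\myr_1,\lambda_1} \tsr x$, for $x \in \F_{\tau_1}\B'$, through the three factors: $\theta\tsr\id$ sends it to $u_{\lambda_1\Lambda_{\tau_1(0)}} \tsr x$ by Theorem~\ref{t Schilling Tingley}(i); then $\iota\tsr\id$ gives $u_{\lambda^1\Lambda_{\tau_1(0)}} \tsr u_{\lambda_2\Lambda_{\tau_1(0)}} \tsr x$ by Lemma~\ref{l tensor injection}; and the last factor fixes the first slot while sending $u_{\lambda_2\Lambda_{\tau_1(0)}} \tsr x \mapsto \F_{\tau_1}\Theta_{\bdr',\lambda'}(u_{\lambda_2\Lambda_0}\tsr\F_{\tau_1}^{-1}x)$. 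As $x$ ranges over $\F_{\tau_1}\B'$, the element $\F_{\tau_1}^{-1}x$ ranges over $\B'$, so by the inductive hypothesis the image set is $u_{\lambda^1\Lambda_{\tau_1(0)}} \tsr \F_{\tau_1}D' = \F_{\tau_1}(u_{\lambda^1\Lambda_0}\tsr D')$, the last equality holding because $\tau$-twists respect tensor products. To close the image identity I would handle the outer $\F_{w_1}$: since $w_1 \le y_1$, Lemma~\ref{l lemma tsr help 2} (with $C = B_2\tsr\cdots\tsr B_p$) gives $u_{\lambda_1\Lambda_0}\tsr\B = \F_{w_1}(u_{\lambda_1\Lambda_0}\tsr\sfb^{\myr_1,\lambda_1}\tsr\F_{\tau_1}\B')$, and because $\Theta_{\bdr,\lambda}$ is a strict embedding it commutes with every $\cf_i$, hence with $\F_{w_1}$; applying it yields $\Theta_{\bdr,\lambda}(u_{\lambda_1\Lambda_0}\tsr\B) = \F_{w_1}\F_{\tau_1}(u_{\lambda^1\Lambda_0}\tsr D')$, which is exactly \eqref{et KR to affine iso subsets gentype 2}.

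Well-definedness of $\B$ I would fold into the same induction. Independence of $\F_{w_1}(\sfb^{\myr_1,\lambda_1}\tsr\F_{\tau_1}\B')$ from the reduced word of $w_1$ follows by transporting the question across the injective map $\Theta_{\bdr,\lambda}$ and Lemma~\ref{l lemma tsr help 2}: it becomes independence of $\F_{j_1}\cdots\F_{j_m}\,\F_{\tau_1}(u_{\lambda^1\Lambda_0}\tsr D')$ from the reduced word, and $\F_{w_1}\F_{\tau_1}(u_{\lambda^1\Lambda_0}\tsr D')$ is a generalized Demazure crystal (as $D'$ is one by the inductive hypothesis), hence well defined by Theorem~\ref{t monomial times Demazure}. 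The main obstacle is therefore the twist bookkeeping in the factorization of $\Theta_{\bdr,\lambda}$; once that is in place, Lemmas~\ref{l lemma tsr help} and~\ref{l lemma tsr help 2} reduce everything else to routine tensor-product manipulations.
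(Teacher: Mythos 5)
Your proposal is correct and is essentially the paper's own argument reorganized as an induction on $p$: the recursive factorization of $\Theta_{\bdr,\lambda}$ you isolate is exactly what the paper's proof traverses in unrolled form, and both arguments invoke the same ingredients---Theorem \ref{t Schilling Tingley}(i), Lemma \ref{l tensor injection}, Lemma \ref{l lemma tsr help 2}, the twist properties of \S\ref{ss twisiting}, and finally Theorem \ref{t monomial times Demazure} together with injectivity of $\Theta_{\bdr,\lambda}$ for well-definedness---at the same points. The only substantive difference is presentational: the paper tracks an arbitrary element via the monomials $G_i$ through all $p$ stages at once, whereas you absorb stages $2,\dots,p$ into the inductive hypothesis, at the cost of explicitly verifying the twist bookkeeping ($\F_{\tau_1}\F_{\tau_2}=\F_{\tau_1\tau_2}$ and twist-equivariance of the Lemma \ref{l tensor injection} embeddings) that the paper's element-by-element computation leaves implicit.
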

\begin{proof}
Choose reduced expressions
$w_i = s_{j_{i,1}} \cdots s_{j_{i,m_i}}$ for all $i \in [p]$.
For now, interpret $\F_{w_i}$ in \eqref{et KR to affine iso subsets gentype} as  $\F_{j_{i,1}} \cdots \F_{j_{i,m_i}}$.
Then we can specify an arbitrary element of  $u_{\lambda_{1} \Lambda_0} \tsr \B$
as in \eqref{e Theta map explicit gentype} by choosing arbitrary
$G_i = \cf_{j_{i,1}}^{d_{i,1}} \cdots \cf_{j_{i,m_i}}^{d_{i,m_i}}$ with $d_{i,1}, \dots, d_{i,m_i} \in \ZZ_{\ge 0}$ for $i \in [p]$.
Tracing through the maps making up  $\Theta_{\bdr, \lambda}$, we obtain
\begin{align}
\label{e Theta map explicit gentype}
& u_{\lambda_{1} \Lambda_0} \tsr G_{1} \Big( \sfb^{\myr_1,\lambda_1} \tsr \F_{\tau_1} G_{2} \big( \sfb^{\myr_2,\lambda_2} \tsr \cdots \F_{\tau_{p-1}} G_{p} (\sfb^{\myr_p,\lambda_p}) \nocdots \big) \Big) \\
= \ &  G_{1} \Big( u_{\lambda_{1} \Lambda_{0}} \tsr  \sfb^{\myr_1,\lambda_1} \tsr \F_{\tau_1} G_{2} \big( \sfb^{\myr_2,\lambda_2} \tsr \cdots \F_{\tau_{p-1}} G_{p} (\sfb^{\myr_p,\lambda_p}) \nocdots \big) \Big)   &&  \text{by Lemma \ref{l lemma tsr help 2}}\notag  \\
\mapsto \  &  G_{1} \Big( u_{\lambda_{1} \Lambda_{\tau_1(0)}} \tsr \F_{\tau_1} G_{2} \big( \sfb^{\myr_2,\lambda_2} \tsr \cdots \F_{\tau_{p-1}} G_{p} (\sfb^{\myr_p,\lambda_p}) \nocdots \big) \Big)   &&  \text{by Theorem \ref{t Schilling Tingley} (i)} \notag\\
= \  &  G_{1} \F_{\tau_1} \Big( u_{\lambda_{1} \Lambda_{0}} \tsr G_{2} \big( \sfb^{\myr_2,\lambda_2} \tsr \cdots \F_{\tau_{p-1}} G_{p} (\sfb^{\myr_p,\lambda_p}) \nocdots \big) \Big)
&&  \text{by \S\ref{ss twisiting}}\notag\\
\mapsto \ &  G_{1}\F_{\tau_1}   \Big( u_{\lambda^{1} \Lambda_{0}} \tsr u_{\lambda_{2} \Lambda_{0}} \tsr G_{2} \big( \sfb^{\myr_2,\lambda_2} \tsr \cdots \F_{\tau_{p-1}} G_{p} (\sfb^{\myr_p,\lambda_p})
\nocdots \big) \Big)  &&  \text{by Lemma \ref{l tensor injection}} \notag\\
= \ &  G_{1} \F_{\tau_1} \Big( u_{\lambda^{1} \Lambda_{0}} \tsr  G_2 \big( u_{\lambda_{2} \Lambda_{0}} \tsr \sfb^{\myr_2,\lambda_2} \tsr \cdots \F_{\tau_{p-1}} G_{p} (\sfb^{\myr_p,\lambda_p})
\nocdots \big)\Big)  &&  \text{by Lemma \ref{l lemma tsr help 2}} \notag\\
\mapsto \  &  G_{1} \F_{\tau_1}  \Big( u_{\lambda^{1} \Lambda_{0}} \tsr G_2 \Big( u_{\lambda_{2} \Lambda_{\tau_2(0)}} \tsr \F_{\tau_2}G_3\big(  \cdots \F_{\tau_{p-1}} G_{p} (\sfb^{\myr_p,\lambda_p}) \nocdots \big)\Big) \Big)\notag  && \text{by Theorem \ref{t Schilling Tingley} (i)}\\
\cdots \notag\\
\mapsto \  &  G_{1} \F_{\tau_1} \Big( u_{\lambda^1\Lambda_{0}} \tsr G_{2} \F_{\tau_2} \Big( u_{\lambda^2\Lambda_{0}} \tsr G_3\tau_3 \big( \cdots
G_{p}\F_{\tau_p} (u_{\lambda^p\Lambda_0}) \big) \nocdots \Big)\Big), \notag
\end{align}
which is an arbitrary element of the right side of \eqref{et KR to affine iso subsets gentype 2}.
Moreover, by Theorem \ref{t monomial times Demazure}, the right side of \eqref{et KR to affine iso subsets gentype 2} does not depend on the chosen reduced expressions for  $w_i$,
so the same goes for  $\B$ since  $\Theta_{\bdr, \lambda}$ is injective.
\end{proof}

Let $\ZZ[P]$ denote the group ring of $P$ with  $\ZZ$-basis $\{ e^\mu \}_{\mu \in P}$.
The \emph{Demazure operators} are linear operators $D_i$ on
 $\ZZ[P]$ defined for each $i \in I$ by
$D_i(f) = \frac{f-e^{-\alpha_i}\cdot s_i(f)}{1-e^{-\alpha_i}},$
where $s_i$ acts on $\ZZ[P]$ by $s_i(e^\mu) = e^{s_i(\mu)}$.
We also have an action of  $\Sigma$ on  $\ZZ[P]$ given by $\tau(e^\mu) = e^{\tau(\mu)}$.
For a reduced expression $w = s_{i_1} \cdots s_{i_m} \in W$, define the operator $D_w = D_{i_1} \cdots D_{i_m}$ on $\ZZ[P]$; it is independent of the choice of reduced expression \cite[Corollary 8.2.10]{Kumarbook}.

Naoi \cite[Theorem 7.1]{NaoiVariousLevels} showed that
$\Theta_{\bdr,\lambda}$ matches the statistic $\langle d, \wt(b) \rangle$ on $U_q(\g)$-crystals
to energy.
Combining this with Theorem \ref{t monomial times Demazure} and \cite[Corollary 4.6]{NaoiVariousLevels} we obtain

\begin{corollary}
\label{c with energy}
Maintain the notation of Theorem \ref{t KR to affine iso subsets gentype}.
The energy adjusted character of the DARK crystal $\B$ agrees with the character of
the generalized Demazure crystal in \eqref{et KR to affine iso subsets gentype 2} (call it $\B'$), and both have the following
Demazure operator formula:
\begin{align*}
e^{\lambda_1\Lambda_0+ \delta C} \sum_{b \in \B} e^{\aff( \wt(b)) - \delta \frac{D(b)}{a_0} } =
\sum_{b \in \B'} e^{\wt(b)}
= D_{w_1} \tau_1\big( e^{\lambda^1\Lambda_0} \cdot D_{w_2} \tau_2\big(e^{\lambda^2\Lambda_0} \cdots  D_{w_p} \tau_p(e^{\lambda^p\Lambda_0})\big)\big),
\end{align*}
where $D(b)$ is the energy of $b$ and $C \in \QQ$ is a constant which depends only on $\lambda$ and~$\bdr$.
\end{corollary}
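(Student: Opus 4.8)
The plan is to prove the two equalities in the displayed formula separately, as the sentence preceding the statement suggests: the right-hand equality is a pure statement about the character of the generalized Demazure crystal $\B'$, while the left-hand equality transports this identity to the DARK crystal $\B$ through the embedding $\Theta_{\bdr,\lambda}$ together with Naoi's identification of the $d$-grading with energy.

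For the right-hand equality I would first observe that $\B'$ is exactly the generalized Demazure crystal appearing in \eqref{et KR to affine iso subsets gentype 2}, so by Theorem \ref{t KR to affine iso subsets gentype} it is well defined and by Theorem \ref{t monomial times Demazure} it is a disjoint union of honest Demazure crystals. Its character is then computed from the two standard facts that $\operatorname{ch} B_w(\Lambda) = D_w(e^\Lambda)$ and that a $\tau$-twist $\F_\tau$ acts on characters by the operator $\tau$ on $\ZZ[P]$ (since it sends each weight $\mu \mapsto \tau(\mu)$). Iterating these from the innermost factor outward, using $\tau_p(e^{\lambda^p\Lambda_0}) = e^{\lambda^p\Lambda_{\tau_p(0)}}$ and so on, yields the nested expression $D_{w_1}\tau_1\big(e^{\lambda^1\Lambda_0}\,D_{w_2}\tau_2(\cdots)\big)$. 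This is precisely \cite[Corollary 4.6]{NaoiVariousLevels}, which I would simply invoke.

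For the left-hand equality, recall from Theorem \ref{t KR to affine iso subsets gentype} that $\Theta_{\bdr,\lambda}$ is a strict embedding of $U_q'(\g)$-crystals with $\Theta_{\bdr,\lambda}(u_{\lambda_1\Lambda_0}\tsr\B) = \B'$, so $b \mapsto b' := \Theta_{\bdr,\lambda}(u_{\lambda_1\Lambda_0}\tsr b)$ is a bijection $\B \to \B'$ preserving $P_{\cl}$-weight. For each $b'$ I would use the canonical decomposition $\wt(b') = \aff(\cl(\wt(b'))) + \frac{\langle d, \wt(b')\rangle}{a_0}\delta$, valid because $\langle d, \aff(\cdot)\rangle = 0$ and $\langle d, \delta\rangle = a_0$. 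Preservation of $P_{\cl}$-weight gives $\cl(\wt(b')) = \lambda_1\cl(\Lambda_0) + \wt(b)$, hence by linearity of $\aff$ one has $\aff(\cl(\wt(b'))) = \lambda_1\aff(\cl(\Lambda_0)) + \aff(\wt(b))$. By \cite[Theorem 7.1]{NaoiVariousLevels} the remaining $d$-grading $\langle d, \wt(b')\rangle$ is identified with the energy of $b$, up to sign and an additive constant independent of $b$, so it equals $-D(b)$ plus such a constant. Substituting and factoring out every $b$-independent term—the weight $\lambda_1\Lambda_0$ together with the leftover $\delta$-shifts, which amalgamate into a single $e^{\lambda_1\Lambda_0 + C\delta}$—turns $e^{\wt(b')}$ into $e^{\lambda_1\Lambda_0 + C\delta}\cdot e^{\aff(\wt(b)) - \delta D(b)/a_0}$; summing over $b \in \B$ then gives the claimed identity.

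The only substantive input is Naoi's energy–$d$-grading matching; the rest is bookkeeping with $\aff$, $\cl$, and $\delta$, and the main point requiring care is that the constant $C$ is uniform over $\B$. Concretely, I must check that the additive offset in the energy matching and the $\delta$-part hidden in $\aff(\cl(\Lambda_0)) = \Lambda_0 - \frac{\langle d,\Lambda_0\rangle}{a_0}\delta$ are both independent of the chosen $b$, so that they combine into a single $C = C(\lambda,\bdr)$. Once this uniformity is confirmed, the two equalities together establish the corollary.
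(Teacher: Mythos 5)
Your proposal is correct and follows essentially the same route as the paper: the paper derives the corollary by combining Naoi's energy--$d$-grading matching \cite[Theorem 7.1]{NaoiVariousLevels} with Theorem \ref{t monomial times Demazure} and the Demazure operator character formula \cite[Corollary 4.6]{NaoiVariousLevels}, exactly the three ingredients you invoke. Your write-up merely makes explicit the $\aff$/$\cl$/$\delta$ bookkeeping and the uniformity of the constant $C$, which the paper leaves implicit.
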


\begin{remark}
When  $w_i =y_i$ for all  $i\in [p]$,  the DARK crystal  $\B$ in
\eqref{et KR to affine iso subsets gentype} is equal to $B_1 \tsr \cdots \tsr B_p$ (this follows from  $\F_{y_i} \{\sfb^{\myr_i,\lambda_i}\} = B_i$
and \cite[Lemma 5.15]{NaoiVariousLevels}). Thus  Proposition~5.16/Corollary 7.2 of \cite{NaoiVariousLevels}
are encompassed by
Theorem \ref{t KR to affine iso subsets gentype}/Corollary \ref{c with energy}.
Note that the  $a_0$ appearing in Corollary \ref{c with energy}
corrects a typo in \cite[Corollary 7.2]{NaoiVariousLevels}.
\end{remark}

\begin{remark}
\label{r extra w0}
In Theorem \ref{t KR to affine iso subsets gentype} and Corollary \ref{c with energy},
we can more generally allow $w_i$ of the form $w_i = v_i w'_i$ where  $v_i \in W_0$ and  $w_i' \le y_i$.
Indeed, in the setting of Lemma \ref{l lemma tsr help 2}, for any  $j \in I_0$,
$\cf_j G( u_{s\Lambda_0} \tsr \sfb^{r,s} \tsr c)  =
\cf_j (u_{s\Lambda_0} \tsr G( \sfb^{r,s} \tsr c) )
= u_{s\Lambda_0} \tsr \cf_j G(\sfb^{r,s} \tsr c)$
as $\cf_j(u_{s\Lambda_0})$ $= \ce_j(u_{s\Lambda_0})= 0$;
hence the lemma holds more generally with $w = vw'$ with  $v \in W_0$,  $w'\le y$.
\end{remark}

For $\g$ of type $A_n^{(1)}$ and $\bdr = \mathbf{1} = (1,\dots, 1)$,
Theorems \ref{t KR to affine iso gentype} and \ref{t KR to affine iso subsets gentype} and Remark \ref{r extra w0} give
\begin{corollary}
\label{t KR to affine iso type A}
Let $\lambda = (\lambda_1 \ge \cdots \ge \lambda_p \ge 0)$ and set $\lambda^j = \lambda_j - \lambda_{j+1}$ with $\lambda_{p+1} = 0$.
Let $\tau$ be the Dynkin diagram automorphism given by $j \mapsto j+1 \bmod n+1$.
Then there is a strict embedding of $U_q'(\hatsl_{n+1})$-crystals 
\begin{align}
\label{et KR to affine iso type A}
\Theta_{\mathbf{1}, \lambda}  \colon   B(\lambda_1\Lambda_0) \tsr B^{1,\lambda_1} \tsr \cdots \tsr B^{1, \lambda_p}
\to
B(\lambda^1\Lambda_{1}) \tsr \cdots \tsr B(\lambda^p \Lambda_p).
\end{align}
Moreover, for any $w_1, \ldots, w_p \in W_0$,
\begin{align*}
& u_{\lambda_{1} \Lambda_0} \tsr \F_{w_1  }
\big( \sfb^{1,\lambda_1} \tsr \F_{\tau} \F_{w_{2}} \big( \sfb^{1,\lambda_2} \tsr \cdots \F_{\tau} \F_{w_p} (\{\sfb^{1,\lambda_p}\})  \cdots \big) \big) \\
&\xmapsto{\Theta_{\mathbf{1}, \lambda}} \
\F_{w_1} \big( u_{\lambda^1\Lambda_{1}} \tsr \F_{\tau} \F_{w_{2}}  \big( u_{\lambda^2\Lambda_{1}} \tsr \cdots
\F_{\tau} \F_{w_p}   (\{u_{\lambda^p\Lambda_1 }\})  \cdots \big)\big).
\end{align*}
\end{corollary}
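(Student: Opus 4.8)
The plan is to obtain this as a direct specialization of Theorems \ref{t KR to affine iso gentype} and \ref{t KR to affine iso subsets gentype} together with Remark \ref{r extra w0} to type $A_n^{(1)}$ with $\bdr = \mathbf{1}$; essentially the only computation is to identify each $\tau_j$ with the rotation $j \mapsto j+1 \bmod (n+1)$.

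First I would record the relevant type-$A$ simplifications. In type $A_n^{(1)}$ every mark and comark equals $1$, so $a_i = a_i^\vee = 1$ and hence $c_i = \max\{1, a_i/a_i^\vee\} = 1$ for all $i \in I_0$. Thus the divisibility condition for perfectness ($c_{\myr_j} \mid \lambda_j$) is automatic, each $B_j = B^{\myr_j, c_{\myr_j}\lambda_j}$ equals the perfect KR crystal $B^{1,\lambda_j}$, and the hypotheses of Theorem \ref{t KR to affine iso gentype} hold. Moreover $\mu_j = c_{\myr_j} w_0(\varpi_{\myr_j}) = w_0(\varpi_1)$ is the same for every $j$, so writing $t_{\mu_j} = y_j \tau_j$ produces a single element $y = y_j \in W$ and a single $\tau = \tau_j \in \Sigma$ independent of $j$.

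The crux is to show that this common $\tau$ is the asserted rotation. Here I would use the conjugation formula $w t_\mu w^{-1} = t_{w(\mu)}$ from \S\ref{ss affine kac moody lie algebras} to write $t_{\mu_j} = t_{w_0(\varpi_1)} = w_0\, t_{\varpi_1}\, w_0^{-1}$. Since $W$ is normal in $\widetilde{W}$ with $\widetilde{W}/W \cong \Sigma$, the quotient map $\widetilde{W} \to \Sigma$ is a homomorphism killing $w_0 \in W$, so the $\Sigma$-part of $t_{w_0(\varpi_1)}$ equals the $\Sigma$-part of $t_{\varpi_1}$. It then remains to compute the latter, which by the explicit realization of $\Sigma$ as diagram rotations in type $A$ (\cite[\S2.2, \S5.2]{NaoiVariousLevels}) is precisely $j \mapsto j+1 \bmod (n+1)$. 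This identification of the direction of the rotation is the one genuinely type-specific point and is the main obstacle, though it is a standard computation in the extended affine Weyl group once the conventions of \S\ref{ss affine kac moody lie algebras} are fixed.

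Granting $\tau_j = \tau$, the rest is formal. In Theorem \ref{t KR to affine iso gentype} the $j$-th target factor $B(\lambda^j \Lambda_{\tau_1 \cdots \tau_j(0)})$ becomes $B(\lambda^j \Lambda_{\tau^j(0)}) = B(\lambda^j \Lambda_j)$ (reading subscripts mod $n+1$, so that for $p \le n$ they are literally $1, \dots, p$ as written), giving the strict embedding \eqref{et KR to affine iso type A}. The explicit image of $u_{\lambda_1 \Lambda_0} \tsr \B$ is then read off from \eqref{et KR to affine iso subsets gentype 2} after replacing every $\tau_i$ by $\tau$. Finally, the hypothesis $w_i \le y_i$ of Theorem \ref{t KR to affine iso subsets gentype} is relaxed to arbitrary $w_i \in W_0$ by invoking Remark \ref{r extra w0} with the factorization $w_i = v_i w_i'$ taking $v_i = w_i \in W_0$ and $w_i'$ the identity element (which satisfies $w_i' \le y_i$ trivially); this is exactly the generality the remark supplies, and it yields the corollary as stated.
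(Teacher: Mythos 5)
Your proposal is correct and takes essentially the same route as the paper, which states this corollary as a direct specialization of Theorems \ref{t KR to affine iso gentype} and \ref{t KR to affine iso subsets gentype} and Remark \ref{r extra w0} to type $A_n^{(1)}$ with $\bdr = \mathbf{1}$ and gives no further detail. Your write-up correctly supplies the points the paper leaves implicit: $c_i = 1$ so perfectness is automatic, all $\mu_j = w_0(\varpi_1)$ give a common $y$ and $\tau$, the $\Sigma$-part of $t_{w_0(\varpi_1)}$ equals that of $t_{\varpi_1}$ (since the projection $\widetilde{W} \to \Sigma$ kills $W$) and is the rotation $j \mapsto j+1 \bmod (n+1)$, and Remark \ref{r extra w0} applies with $w_i' = e \le y_i$ to allow arbitrary $w_i \in W_0$.
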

This result is used in \cite{BMP} to connect the katabolism operations of Lascoux \cite{La} and Shimozono-Weyman \cite{SW} to generalized Demazure crystals.
The combinatorial significance of Theorem \ref{t KR to affine iso subsets gentype} for more general  $\bdr$ and in other types remains to be explored.

\vspace{2mm}
\noindent
\textbf{Acknowledgments.}
We thank Katsuyuki Naoi and Jennifer Morse for helpful discussions
and Elaine~So for help typing.

\bibliographystyle{plain}
\bibliography{mycitations}
\def\cprime{$'$} \def\cprime{$'$} \def\cprime{$'$}

\end{document}